\title{\Large \bf Quantum Knots and Knotted Zeros}
\author{Louis H. Kauffman\supit{a} and Samuel J. Lomonaco Jr.\supit{b}
\skiplinehalf
\supit{a} Department of Mathematics, Statistics and Computer Science  
(m/c 249), 851 South Morgan Street, University of Illinois at Chicago,
Chicago, Illinois 60607-7045, USA \\and\\Department of Mechanics and Mathematics,Novosibirsk State University,Novosibirsk, Russia\\
\supit{b} Department of Computer Science and Electrical Engineering, University of
Maryland Baltimore County, 1000 Hilltop Circle, Baltimore, MD 21250, USA}
\begin{document} 

\newcommand{\Across}{\raisebox{-0.25\height}{\includegraphics[width=0.5cm]{A.eps}}}
\newcommand{\Bcross}{\raisebox{-0.25\height}{\includegraphics[width=0.5cm]{B.eps}}}
\newcommand{\Asmooth}{\raisebox{-0.25\height}{\includegraphics[width=0.5cm]{C.eps}}}
\newcommand{\Bsmooth}{\raisebox{-0.25\height}{\includegraphics[width=0.5cm]{D.eps}}}
\newcommand{\Rcurl}{\raisebox{-0.25\height}{\includegraphics[width=0.5cm]{Rcurl.eps}}}
\newcommand{\Lcurl}{\raisebox{-0.25\height}{\includegraphics[width=0.5cm]{Lcurl.eps}}}
\newcommand{\Arc}{\raisebox{-0.25\height}{\includegraphics[width=0.5cm]{Arc.eps}}}

 \maketitle

\begin{abstract}
In 2001, Michael Berry \cite{Berry} published the paper "Knotted Zeros in the Quantum States of Hydrogen" in Foundations of Physics. In this paper we show how to place Berry's discovery in the context of general knot 
theory and in the context of our formulations for quantum knots. Berry gave a time independent wave function for hydrogen, as a map from three space $R^3$ to the complex plane and such that the inverse image of $0$ in the complex plane contains a knotted curve in $R^3.$ We show that for knots in $R^3$ this is a generic situation in that every smooth knot $K$ in $R^3$ has a smooth classifying map $f:R^3 \longrightarrow C$ (the complex plane) such that  $f^{-1}(0) = K.$ This leaves open the question of characterizing just when such $f$ are wave-functions for quantum systems. One can compare this result with the work of Mark Dennis and his collaborators, with the work of Daniel Peralta-Salas and his collaborators, and with the work of Lee Rudolph. Our approach provides great generality to the structure of knotted zeros of a wavefunction and opens up many new avenues for research in the relationships of quantum theory and knot theory. We show how this classifying construction can be related our previous work on two dimensional and three dimensional mosaic and lattice quantum knots.
\end{abstract}

\keywords{knots, links, braids, quantum knots, ambient group, groups, graphs, quantum computing, unitary transformation, fundamental group, knot complement, classifying map,link
of singularity, fibration, Schrodinger equation, Hamiltonian.}

\section{Introduction}
The purpose of this paper is to place our concept of quantum knots in a framework that includes knots that are given a ``classifying map" $f: S^3 \longrightarrow C$ where $C$ is the
complex plane, and the knot $K = f^{-1}(0)$ is the inverse image of the origin in the complex plane. That is, the knot or link is the set of zeroes of the "wavefunction" $f.$ This is the vision of knots and their relation to wavefunctions that is proposed by Michael Berry \cite{Berry}.  We begin in Section 2 by reviewing
 our previous work on quantum knots where we model the topological information in a knot by a state
vector in a Hilbert space that is directly constructed from mosaic diagrams for the knots. In Section 3 we
 give a general definition of quantization of mathematical structures and apply it to the quantization of 
 the set of classical knots (embeddings of a circle into three dimensional space). The group of 
 homeomorphisms of three dimensional space acts on this set of embeddings. The Hilbert space that results from this set of embeddings is very large, but descriptive of the sort of knotting phenomena that may occur in nature such as knotted vortices in super-cooled Helium or knotted gluon fields.
 In Section 4 we define classifying maps for knots as described above, prove that all knots can be described by such mappings and discuss the contexts, topological and physical that are relevant to further work in the direction of this paper. The present paper is meant to be a first step in connecting our formulations of quantum knots with the context of knotted zeroes of quantum wavefunctions. For more about the basic aspects of our quantization procedures the reader is referred to \cite{LomQknots1,LomQknots2,KauffQknots,KauffQknots1}.  

\noindent{\bf Acknowledgement.} Kauffman's work was supported by the Laboratory of Topology and Dynamics, Novosibirsk State University (contract no. 14.Y26.31.0025 with the Ministry of Education and Science of the Russian Federation). Lomonaco's work was supported by NASA Grant Number NNH16ZDA001N-AIST16-0091.\\

\section{Mosaic Quantum Knots}
We begin by explaining the basic idea of mosaic quantum knots as it appears  in 
our papers \cite{KauffQknots,KauffQknots1,LomQKnots,LomQknots1,LomQknots2}. An application
of quantum knots can be found \cite{Shor} in the paper by Farhi, Gosset, Hassidim, Lutominski and 
Shor. There the reader will find proof that quantum knots is a money-making idea.
\bigbreak

View Figures 1, and 2. In the left-most part of Figure 1 we illustrate a mosaic version of a trefoil knot using a $4 \times 4$ space of tiles. In Figure 2 we show the eleven basic 
tiles that can be repeated used in $n \times n$ tile spaces to make diagrams for any classical
knot or link. So far this is a method for depicting knots and links and has no quantum interpretation.
However, as in our previous papers, we use the philosophy that given a well-defined discrete set of 
objects, one can define a vector space with an orhonormal basis that is in one-to-one correspondence
with these objects. Here we let $V$ be the complex vector space with basis in one-to-one 
correspondence with the set of eleven basic tiles shown in Figure 2. An $n \times n$ mosaic as shown in Figure 1 is then regarded as an element in the tensor product of $n^{2}$ copies of $V$.
We order the tensor product by consectively going through the rows of the mosaice from left to right 
and from top to bottom. In this way, knot diagrams represented by $n \times n$ mosaics are
realized as vectors in $H_{n} = V \otimes V \otimes \cdots \otimes V$ where there are 
$n^{2}$ factors in this tensor product.
\bigbreak

Isotopy moves on the mosaic diagrams are encoded by tile replacements that induce unitary transformations on the Hilbert space. We refer to \cite{LomQKnots} for the details. The upshot of this fomulation of 
isotopies of the knots is that the diagrammatic isotopies correspond to unitary transformations of the 
Hilbert space $H_{n}$ when the isotopies are restricted to the $n \times n$ lattice. In this way we obtain for each $n$ a group of isotopies $A_{n}$ that we call the {\em ambient  group}. This has the advantage that it turns a version of the Reidemeister moves on knot and link diagrams into a group
and it provides for a quantum formulation not just for the knot and link diagrams, but also for their 
isotopies. Knots and links are usually regarded as entities of a classical nature. By 
making them into quantum information, we have created a domain of quantum knots and links.
\bigbreak

There are many problems and many avenues available for the exploration of quantum knots and links.
It is not the purpose of this paper to specialize in this topic. We show them in order to emphasize the idea that one can quantize combinatorial categories by formulating appropriate Hilbert spaces for their
objects and morphisms. However, it is worth mentioning that other diagrammatic categories for knots and links can be easily accomodated in the mosaic link framework. View Figure 1 again and examine the middle and right diagrams in the figure. Here we show diagrams containing white and black graphical nodes. These can be interpreted for extensions of knot theory to virtual knots or knotted graph 
embeddings. One extends the vector space for the basic tiles and then also adds moves that are
appropriate for the theory in question. In the case of virtual knot theory and the theory of knotted graphs, there is no problem in making these extensions. We will carry them out in a separate paper. The point of 
this section has been to remind the reader of our previous work, and to point to these avenues along which it can be extended.  \bigbreak

Lets go back to classical knot theory in mosaic form. One of the problems in studying this theory is the 
matter of articulating invariants of knots so that they are quantum observables for the theory.
Many invariants such as the Jones polynomial and even the bracket model for the Jones polynomial seem to be resistant to formulation as quantum 
observables. However, there is a very general result for mosaic quantum knots that is intellectually satisfying that we have proved in our earlier work \cite{LomQKnots}.
\bigbreak

\begin{theorem} Let $| K \rangle$ be a mosaic knot diagram in an $n \times n$ lattice.
Then there is a quantum observable $\chi(K)$ such that $\chi(K) | K' \rangle = | K' \rangle $ if and only if
$ | K' \rangle $ is in the orbit of $ | K \rangle $ under the action of the ambient  group
$A_{n}.$ When $|K' \rangle$ is not in the orbit, then $\chi(K) | K' \rangle = 0.$ In other words $\chi(K)$ is a characteristic function for the knot-type of $K$ in the $n \times n$
mosaic lattice.
\end{theorem}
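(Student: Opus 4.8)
The plan is to realize $\chi(K)$ as the orthogonal projection of $H_n$ onto the linear span of the $A_n$-orbit of $|K\rangle$, and then to verify that this projection has the two stated eigenvalue properties. The first step is to recall from \cite{LomQKnots} that $A_n$ acts on the \emph{finite} set $\mathcal{M}_n$ of $n\times n$ mosaic diagrams by permutations: each generating mosaic move (a planar-isotopy move or Reidemeister move localized in a sub-block of the lattice) is a partial involution on $\mathcal{M}_n$, which we extend by the identity on those mosaics to which it does not apply, and $A_n$ is the subgroup of $\mathrm{Sym}(\mathcal{M}_n)$ generated by these involutions. On $H_n = V^{\otimes n^2}$ each such permutation is represented by the corresponding permutation matrix in the orthonormal basis $\{\, |M\rangle : M \in \mathcal{M}_n \,\}$, so the elements of $A_n$ act as unitaries; in particular the orbit $\mathcal{O}(K) := \{\, g\cdot K : g \in A_n \,\} \subseteq \mathcal{M}_n$ is a finite set of mutually orthogonal basis vectors.

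Next I would set $\chi(K) := \sum_{M \in \mathcal{O}(K)} |M\rangle\langle M|$. This is a sum of mutually orthogonal rank-one projections onto basis vectors, so $\chi(K)^{\dagger} = \chi(K)$ and $\chi(K)^{2} = \chi(K)$; hence $\chi(K)$ is a quantum observable (a Hermitian operator) with spectrum contained in $\{0,1\}$, namely the orthogonal projection onto $\mathrm{span}\{\, |M\rangle : M \in \mathcal{O}(K) \,\}$. The stated properties are then immediate from orthonormality. If $|K'\rangle$ lies in the orbit of $|K\rangle$, i.e.\ $K' = g\cdot K$ for some $g \in A_n$, then $|K'\rangle$ is exactly one of the summand basis vectors, so $\chi(K)|K'\rangle = |K'\rangle$; if $K'$ is not in the orbit, then $\langle M | K' \rangle = 0$ for every $M \in \mathcal{O}(K)$, whence $\chi(K)|K'\rangle = 0$. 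Since $A_n$-orbits partition $\mathcal{M}_n$, one also has $\mathcal{O}(K') = \mathcal{O}(K)$ for every $K'$ in the orbit, so $\chi$ is constant on orbits and genuinely depends only on the quantum-knot type represented by $K$ at level $n$; thus $\chi(K)$ is the claimed characteristic function.

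The operator-theoretic part of this argument is essentially formal, so I expect the real content — which I would import from \cite{LomQKnots} rather than reprove — to be the assertion that the planar-isotopy and Reidemeister mosaic moves can be organized into honest permutations of $\mathcal{M}_n$, so that $A_n$ is a well-defined subgroup of $\mathrm{Sym}(\mathcal{M}_n)$. The delicate points there are making each move invertible and giving it a consistent global definition on all of $\mathcal{M}_n$ (acting trivially wherever the relevant sub-block pattern is absent); once that is in hand, the theorem follows. I would close by noting what the argument does \emph{not} deliver: the operator $\chi(K)$ constructed here is merely the spectral projector onto an orbit, with no claim of efficient implementability or of arising from a physically natural measurement, and the genuinely hard, still-open problem is to produce observables for mosaic quantum knots that are simultaneously structured (for instance, assembled from the bracket state sum or the Jones polynomial) and realizable by a feasible quantum process.
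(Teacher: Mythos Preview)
Your proposal is correct and follows exactly the paper's approach: define $\chi(K)$ as the sum $\sum_{M \in \mathrm{Orbit}(K)} |M\rangle\langle M|$ over the finite $A_n$-orbit and read off the eigenvalue properties from orthonormality of the mosaic basis. You supply more detail (Hermiticity, idempotence, the permutation structure of $A_n$, and the caveat about efficiency) than the paper's one-line proof, but the construction and logic are identical.
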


\begin{proof} Define $\chi(K)$ by the formula
$$\chi(K) = \sum_{|K' \rangle \in Orbit(K)} | K' \rangle \langle K' |$$
where $Orbit(K)$ denotes the orbit of $| K \rangle$ under the action of the ambient 
group $A_{n}.$ Note that $Orbit(K)$ is a finite set. The Theorem follows directly from this 
definition.
\end{proof}

This Theorem does not make invariants of knots that are efficient to calculate, but it is intellectually 
satisfying to know that, in principle, in the $n \times n$ lattice, we can distinguish two diagrams that
are inequivalent by the Reidemeister moves for that lattice size. Furthermore, we can use the 
characteristic observables $\chi(K).$  to make observables for any real valued classical knot invariant.
By a classical knot invariant, we mean a function on standard knot diagrams that is invariant under the
usual graphical Reidemeister moves. Such a function is also tautologically defined on mosaic diagrams
and is invariant under the mosaic moves for any $n \times n$ mosaic lattice. For example, the 
Jones polynomial \cite{JO} $V_{K}(t)$ is a Laurent polynomial valued invariant. 
By taking the variable $t$
 in the Jones polynomial to be a specific real number, we obtain from the Jones polynomial, a real-valued classical knot invariant.
\bigbreak

\begin{theorem}Let $Inv(K)$ denote a real-valued classical invariant of knots and links.
Then there is an observable $O$ on the Hilbert space for any $n \times n$ mosaic lattice such 
that $O|K \rangle = Inv(K)|K \rangle$ for any knot vector $| K \rangle$ in the lattice. In this sense,
any real-valued classical knot invariant corresponds to a quantum observable whose eigenvalues
are the values of this invariant.
\end{theorem}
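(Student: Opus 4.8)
The plan is to build $O$ directly out of the characteristic observables $\chi(K)$ furnished by Theorem 1. First I would note that the ambient group $A_n$ acts on the finite set of mosaic knot and link diagrams in the $n \times n$ lattice with only finitely many orbits; choose one representative diagram $K_1, \dots, K_m$, one for each orbit. Since the generators of $A_n$ implement the lattice-restricted Reidemeister moves, any two diagrams in the same orbit represent the same knot or link type, and hence $Inv$ takes a single value $v_j := Inv(K_j)$ on the whole orbit $Orbit(K_j)$.

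Next I would define
$$ O = \sum_{j=1}^{m} v_j \, \chi(K_j), $$
understood to act as $0$ on the orthogonal complement in $H_n$ of the span of all knot vectors. Because distinct orbits are disjoint, the operators $\chi(K_j) = \sum_{|K'\rangle \in Orbit(K_j)} |K'\rangle \langle K'|$ are mutually orthogonal projections, each self-adjoint, and the coefficients $v_j$ are real, so $O$ is self-adjoint and is a bona fide quantum observable. Any knot vector $|K\rangle$ lies in exactly one orbit $Orbit(K_j)$, whence $\chi(K_j)|K\rangle = |K\rangle$ and $\chi(K_i)|K\rangle = 0$ for $i \ne j$; therefore $O|K\rangle = v_j |K\rangle = Inv(K)|K\rangle$, as required. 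The set of eigenvalues of $O$ is exactly $\{v_1, \dots, v_m\}$, the set of values assumed by $Inv$ on the lattice, which is the eigenvalue claim.

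The only substantive point — and the one to state carefully — is that $Inv$ really is constant on each $A_n$-orbit; here one invokes the correspondence, established in \cite{LomQKnots}, between the ambient group action and the Reidemeister moves realizable inside the lattice, together with the tautological fact that a classical invariant of knot diagrams is invariant under those moves. I do not expect a genuine obstacle beyond this bookkeeping. It is worth remarking that the partition of diagrams into $A_n$-orbits may be strictly finer than the partition into knot types, since a small lattice cannot accommodate every isotopy; but this causes no difficulty, as $Inv$ remains well-defined on each orbit and the definition of $O$ and the verification above are unaffected.
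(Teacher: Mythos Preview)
Your proposal is correct and follows essentially the same approach as the paper: define $O = \sum_{K} Inv(K)\,\chi(K)$ with the sum over one representative per $A_n$-orbit, then read off the eigenvalue property from Theorem~1. You have simply fleshed out details the paper leaves implicit (orthogonality of the $\chi(K_j)$, self-adjointness, constancy of $Inv$ on orbits, and the remark that $A_n$-orbits may refine knot types).
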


\begin{proof} Define the observable $O$ by the formula
$$O = \sum_{K} Inv(K) \chi(K)$$ where $K$ runs over one representative for each ambient group  orbit in  the $n \times n$ mosaic lattice. Here $\chi(K)$ is the observable defined in Theorem 1. The 
Theorem then follows directly from this definition. 
\end{proof}

In this sense, the quantum observables for mosaic quantum knots are universal with respect to 
real-valued classical knot invariants. It remains to be seen if there are such observables that have
a better than classical efficiency of calculation.
\bigbreak

Other issues for quantum knots involve considering superpositions of them and properties of these
superpositions. We refer the reader to \cite{KauffQknots,LomQKnots,LomQknots1} for examples along these lines.
\bigbreak

 \begin{figure}[htb]
     \begin{center}
     \begin{tabular}{c}
$ 
\begin{array}{cccc}
\includegraphics[width=1cm]{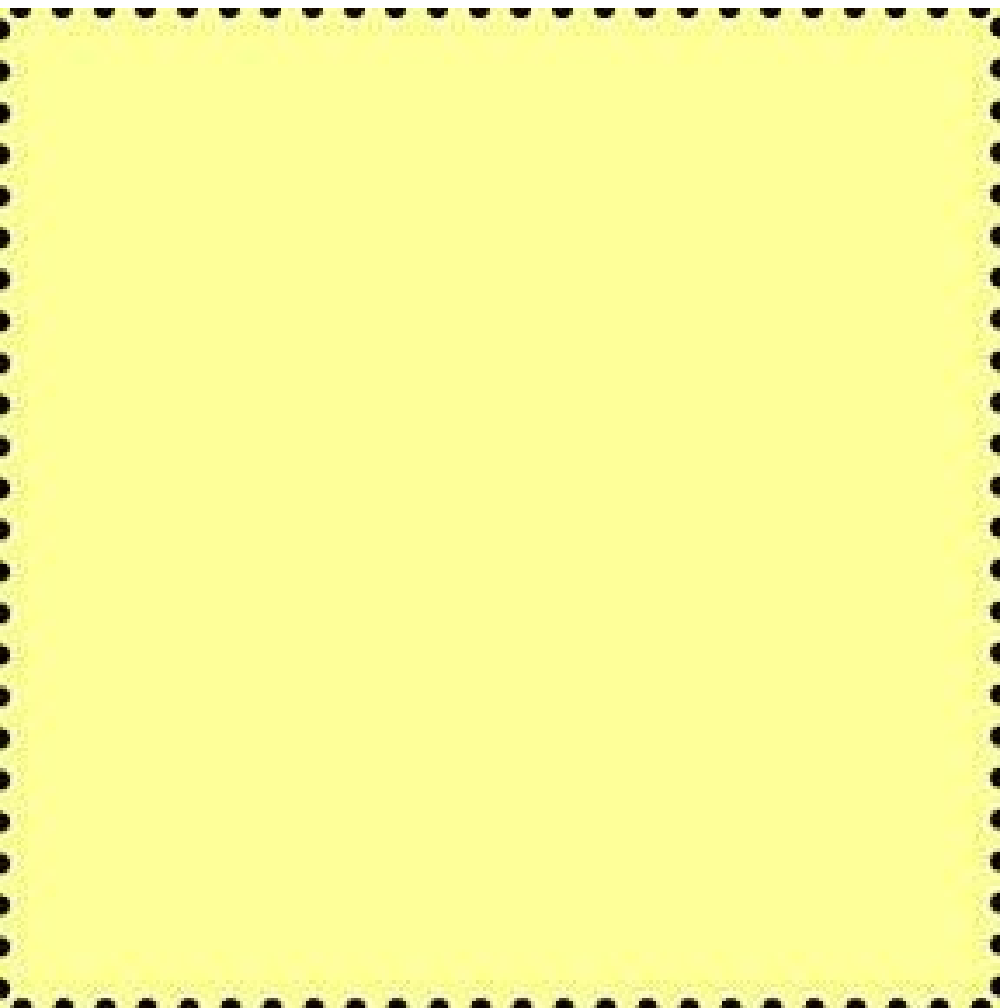} & \includegraphics[width=1cm]{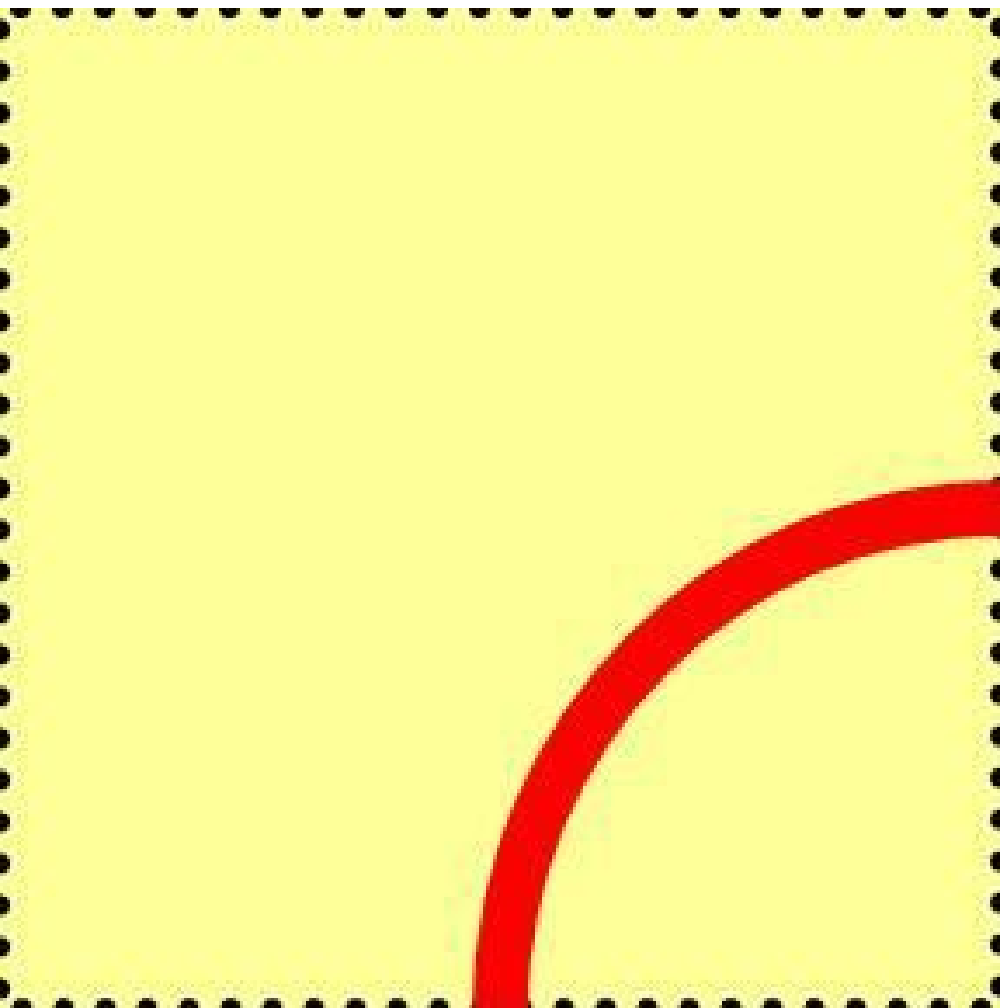} & 
\includegraphics[width=1cm]{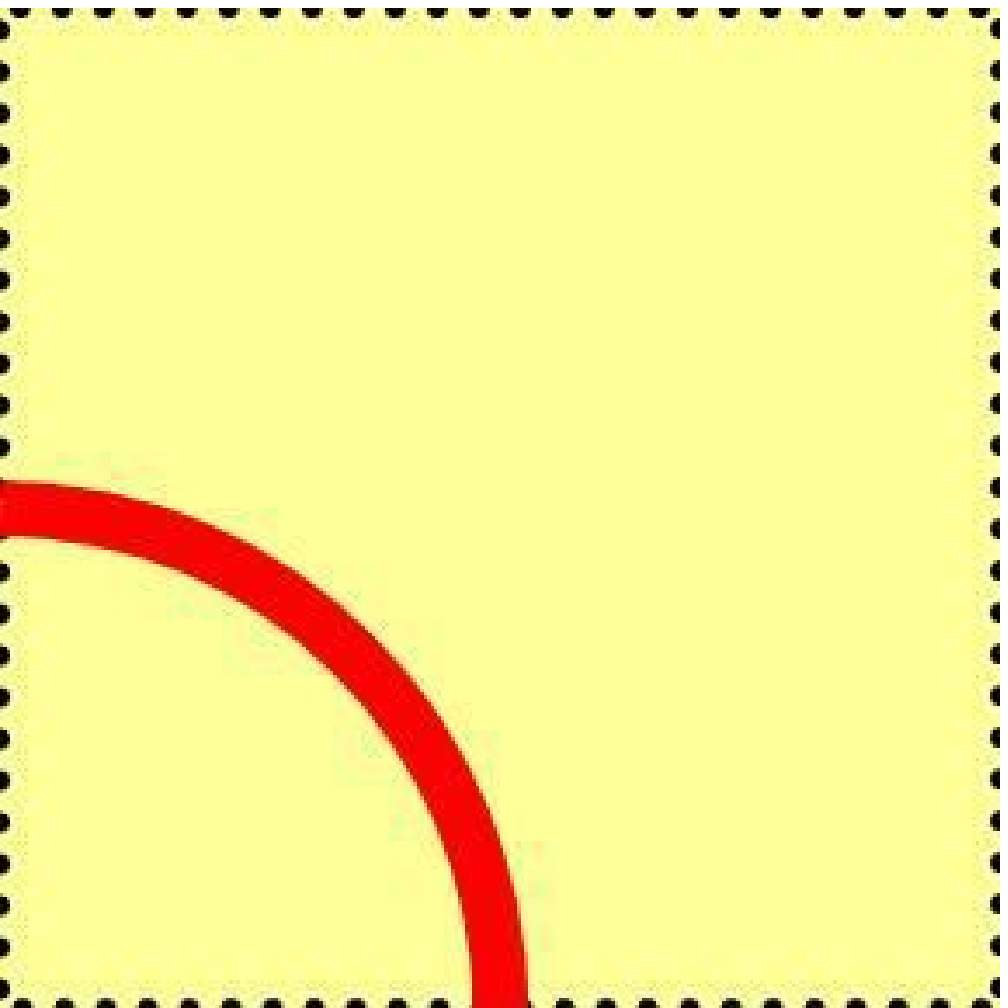} &\includegraphics[width=1cm]{ut00.EPS}  \\ 
\includegraphics[width=1cm]{ut02.EPS} & \includegraphics[width=1cm]{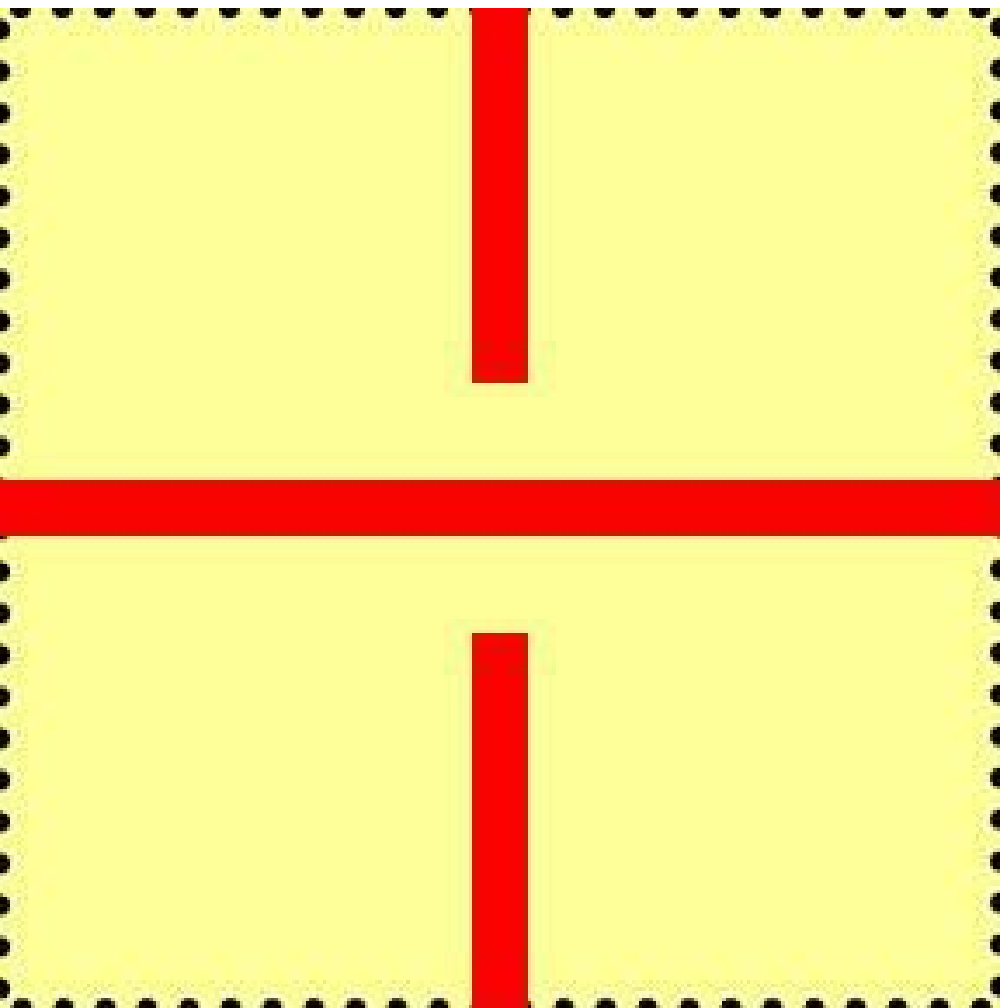} &
\includegraphics[width=1cm]{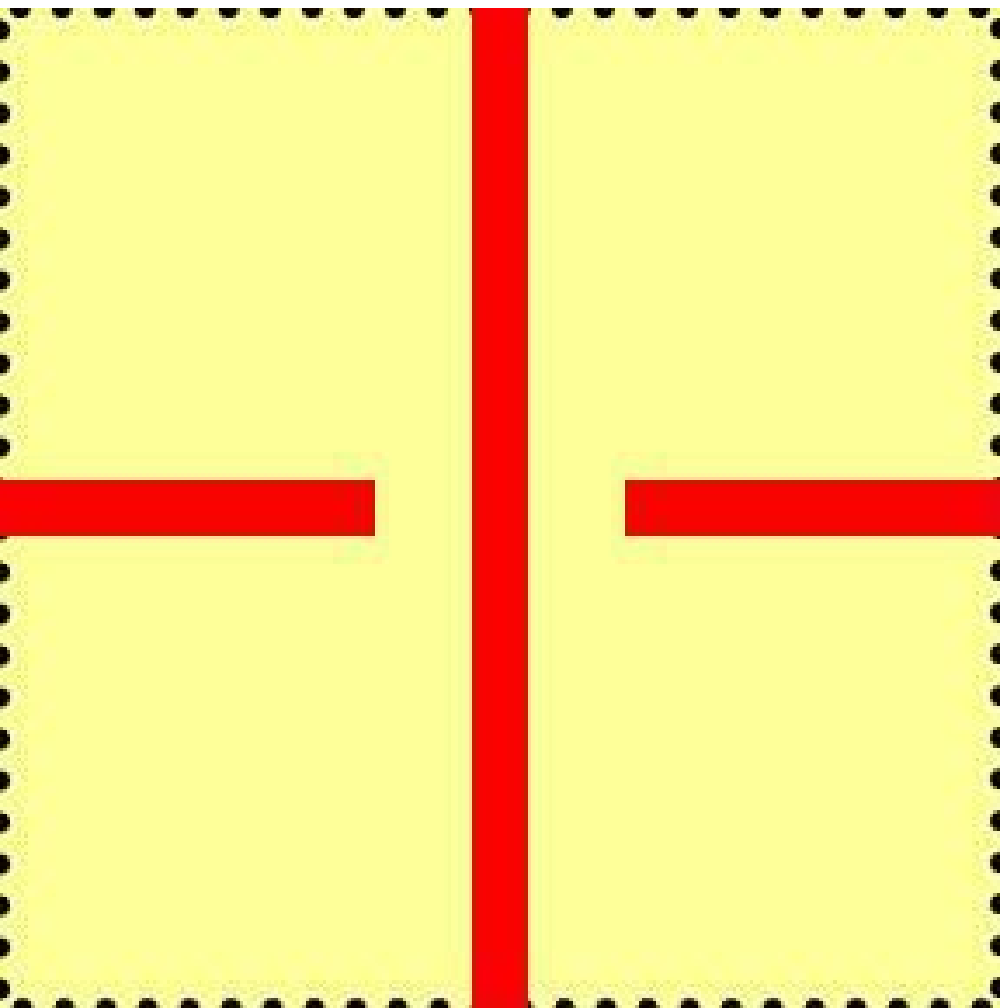} &\includegraphics[width=1cm]{ut01.EPS}  \\ 
\includegraphics[width=1cm]{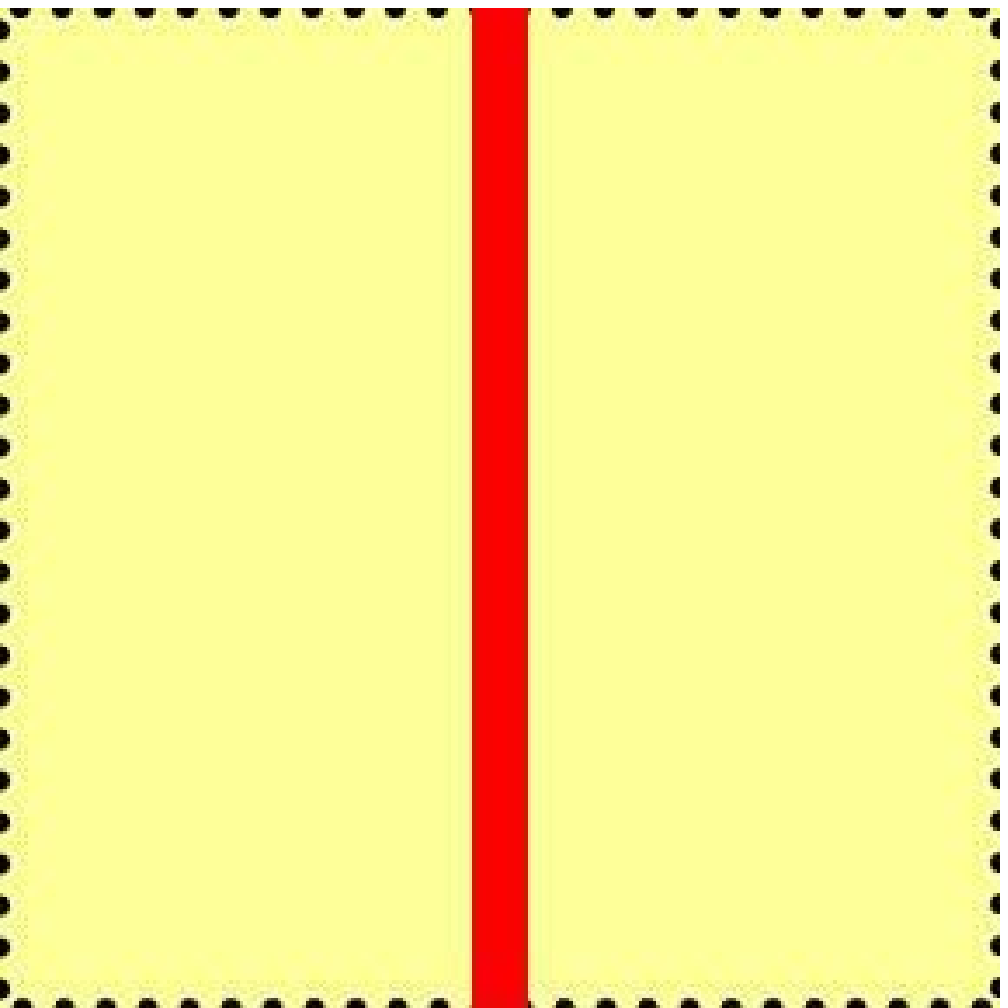} & \includegraphics[width=1cm]{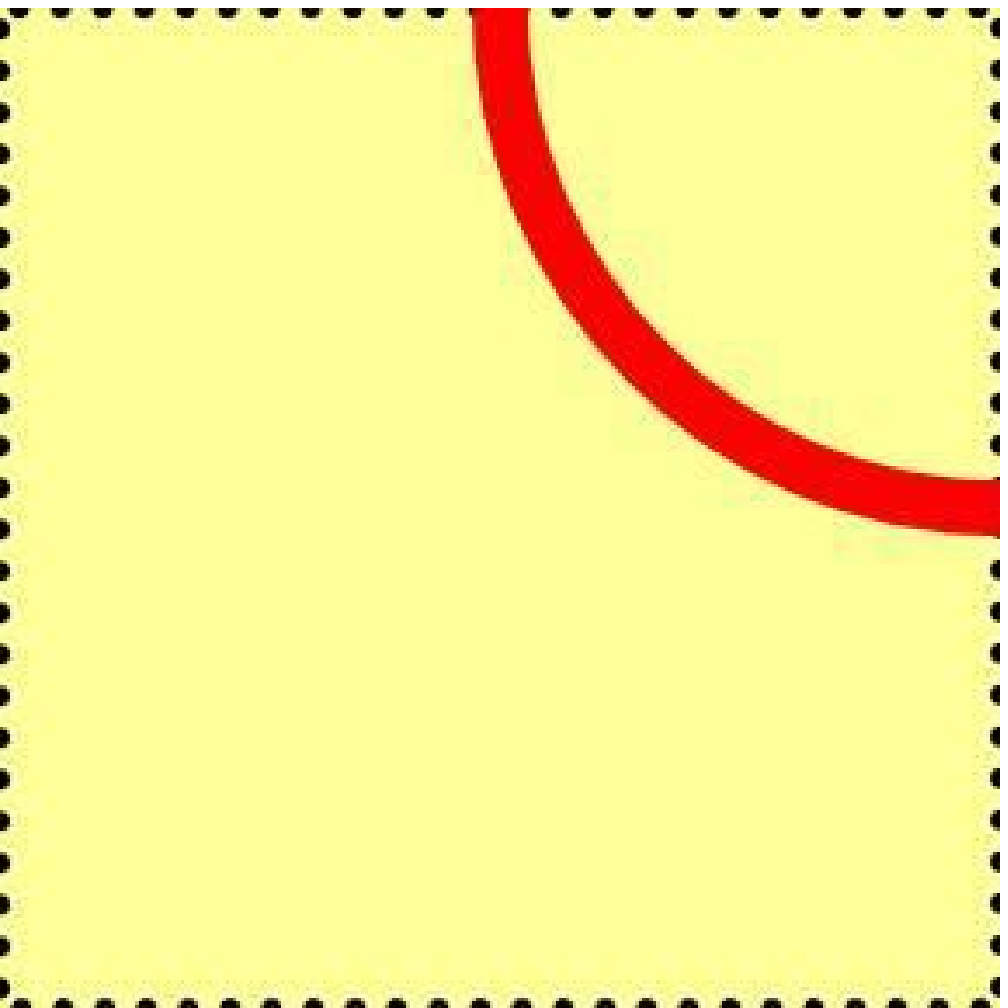} &
\includegraphics[width=1cm]{ut09.EPS} &\includegraphics[width=1cm]{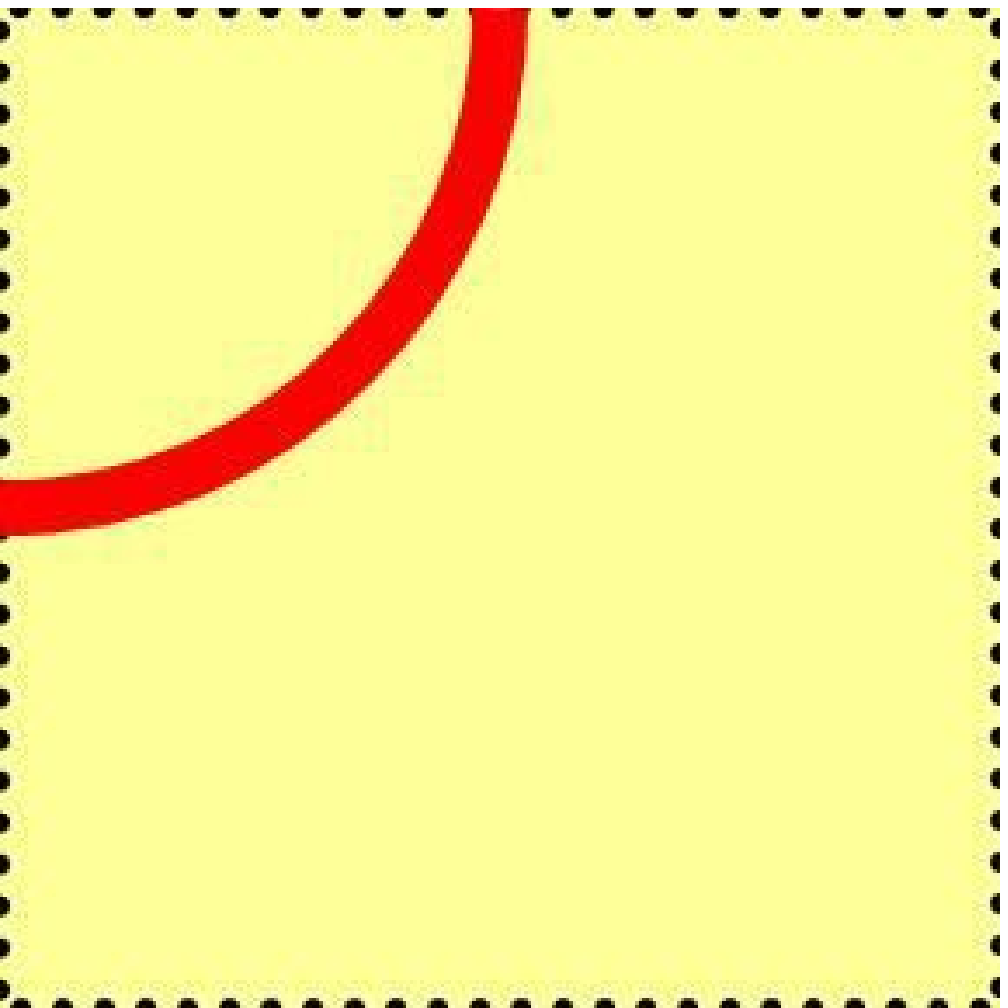}  \\ 
\includegraphics[width=1cm]{ut03.EPS} & \includegraphics[width=1cm]{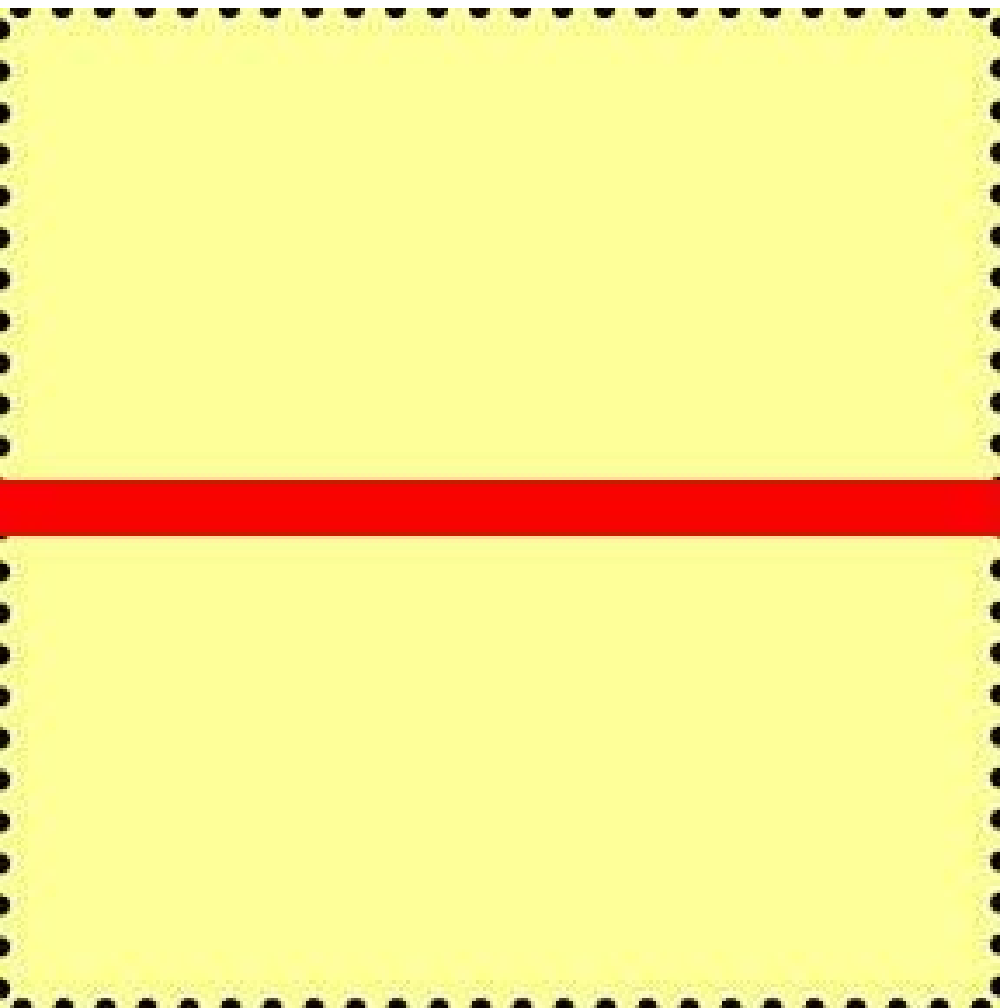} &
\includegraphics[width=1cm]{ut04.EPS} &\includegraphics[width=1cm]{ut00.EPS}   
\end{array}
$

$ 
\begin{array}{cccc}
\includegraphics[width=1cm]{ut00.EPS} & \includegraphics[width=1cm]{ut02.EPS} & 
\includegraphics[width=1cm]{ut01.EPS} &\includegraphics[width=1cm]{ut00.EPS}  \\ 
\includegraphics[width=1cm]{ut02.EPS} & \includegraphics[width=1cm]{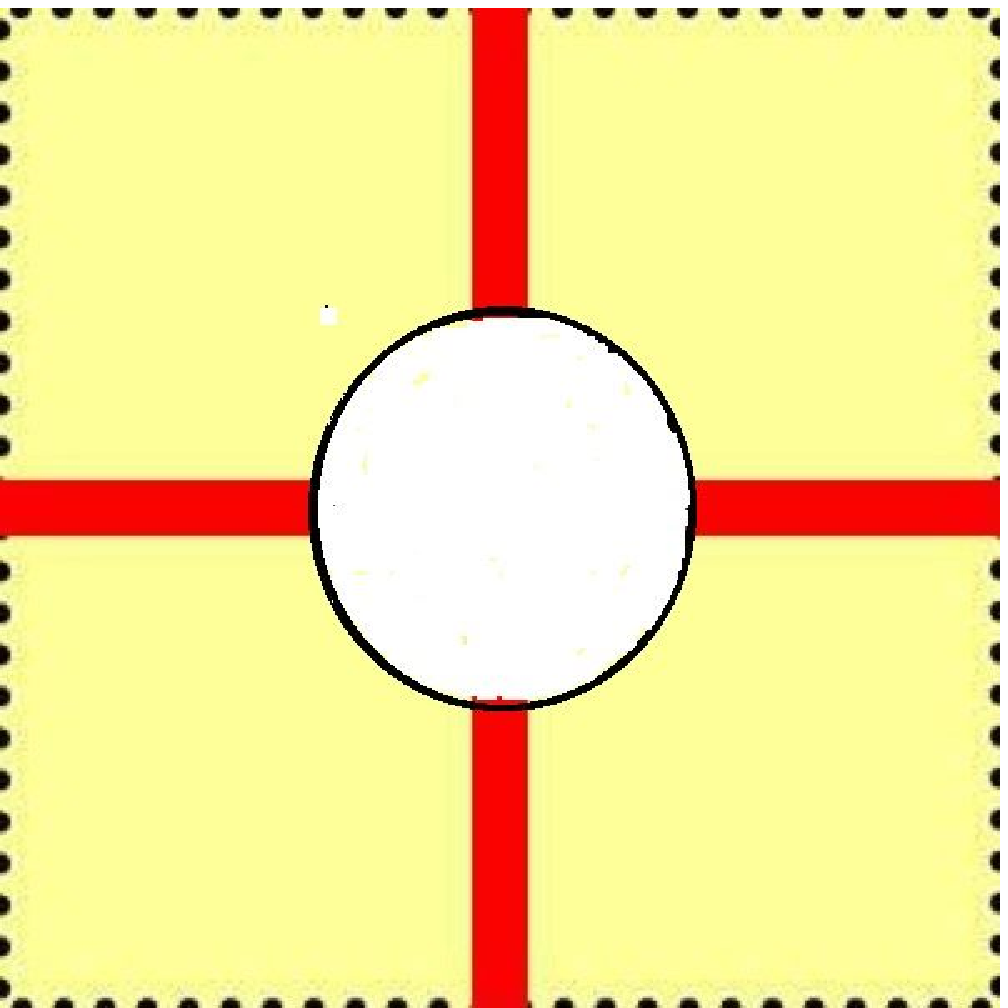} &
\includegraphics[width=1cm]{ut10.EPS} &\includegraphics[width=1cm]{ut01.EPS}  \\ 
\includegraphics[width=1cm]{ut06.EPS} & \includegraphics[width=1cm]{ut03.EPS} &
\includegraphics[width=1cm]{ut09.EPS} &\includegraphics[width=1cm]{ut04.EPS}  \\ 
\includegraphics[width=1cm]{ut03.EPS} & \includegraphics[width=1cm]{ut05.EPS} &
\includegraphics[width=1cm]{ut04.EPS} &\includegraphics[width=1cm]{ut00.EPS}   
\end{array}
$

$ 
\begin{array}{cccc}
\includegraphics[width=1cm]{ut00.EPS} & \includegraphics[width=1cm]{ut02.EPS} & 
\includegraphics[width=1cm]{ut01.EPS} &\includegraphics[width=1cm]{ut00.EPS}  \\ 
\includegraphics[width=1cm]{ut02.EPS} & \includegraphics[width=1cm]{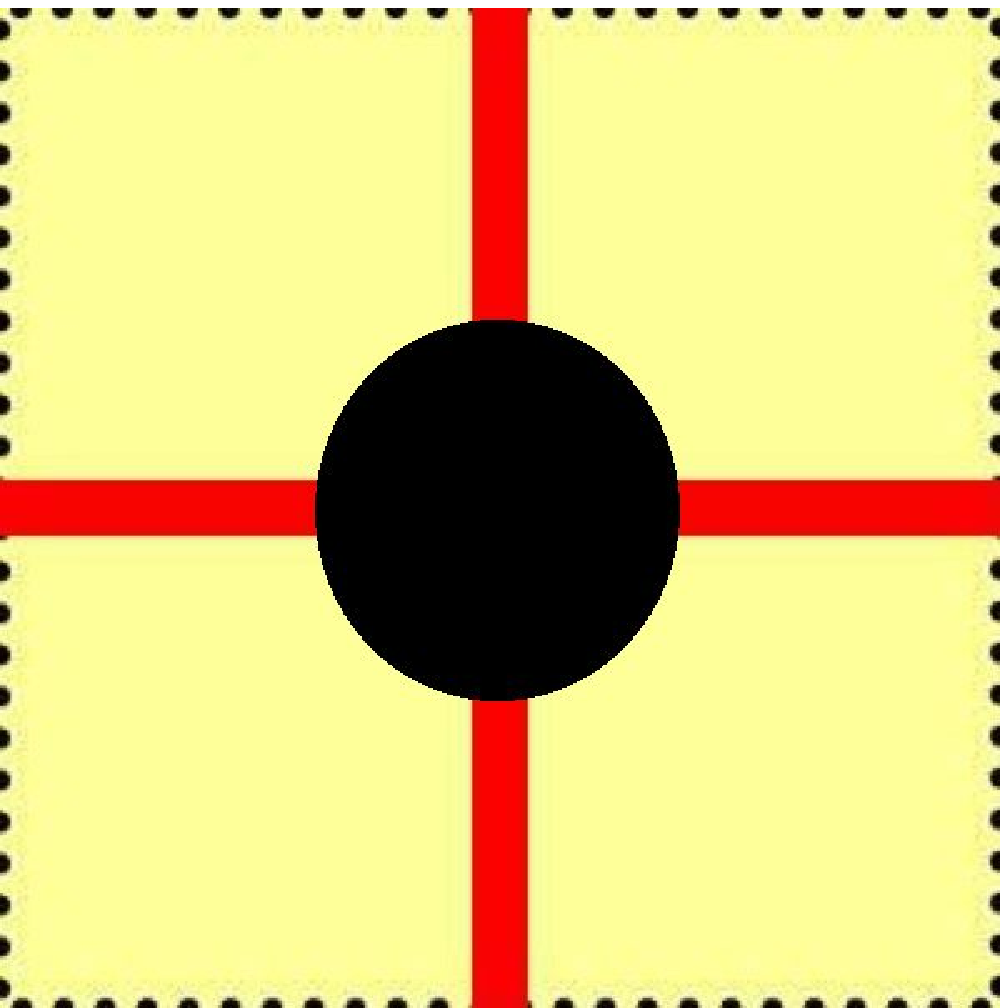} &
\includegraphics[width=1cm]{ut10.EPS} &\includegraphics[width=1cm]{ut01.EPS}  \\ 
\includegraphics[width=1cm]{ut06.EPS} & \includegraphics[width=1cm]{ut03.EPS} &
\includegraphics[width=1cm]{utvertex.EPS} & \includegraphics[width=1cm]{ut04.EPS}  \\ 
\includegraphics[width=1cm]{ut03.EPS} & \includegraphics[width=1cm]{ut05.EPS} &
\includegraphics[width=1cm]{ut04.EPS} &\includegraphics[width=1cm]{ut00.EPS}   
\end{array}
$

     \end{tabular}
     \caption{\bf Classical, Virtual and Graphical Mosaic Knots}
     \label{Figure 1}
\end{center}
\end{figure}

\begin{figure}
     \begin{center}
     \begin{tabular}{c}
     
$ 
\begin{array}{ccccccccccc}
\includegraphics[width=1cm]{ut01.EPS} & 
\includegraphics[width=1cm]{ut02.EPS} &\includegraphics[width=1cm]{ut03.EPS}  & 
\includegraphics[width=1cm]{ut04.EPS} &\includegraphics[width=1cm]{ut05.EPS} &
\includegraphics[width=1cm]{ut06.EPS}&\includegraphics[width=1cm]{ut00.EPS} &
\includegraphics[width=1cm]{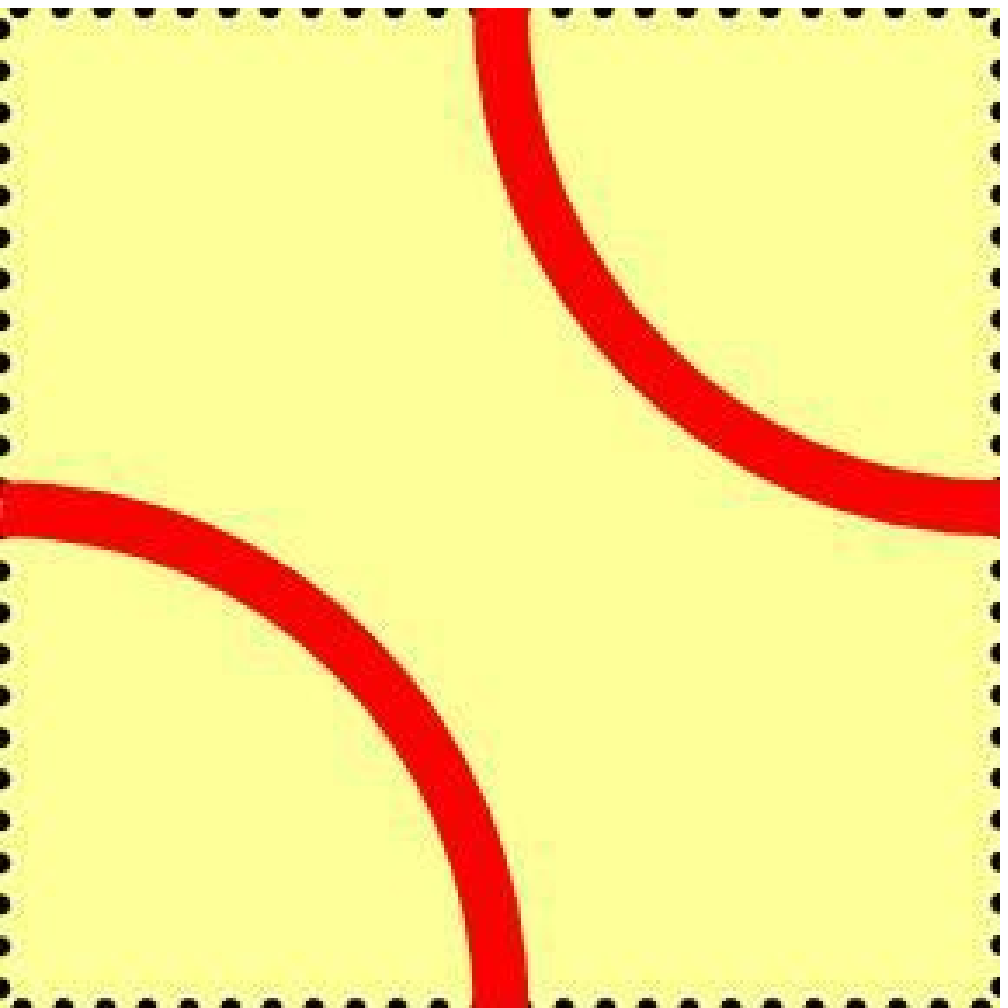}  & \includegraphics[width=1cm]{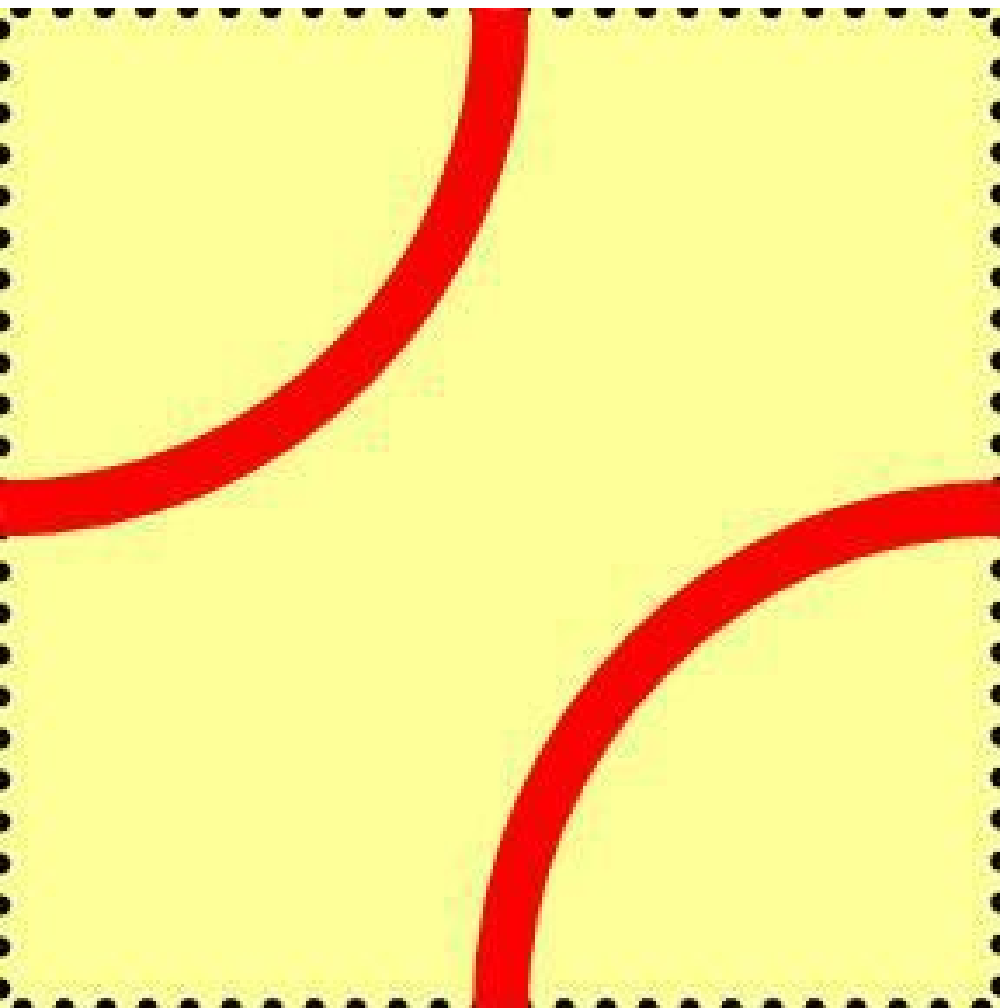} & 
\includegraphics[width=1cm]{ut09.EPS} &\includegraphics[width=1cm]{ut10.EPS}  
\end{array}
$

     \end{tabular}
     \caption{\bf Basic Tiles}
     \label{Figure}
\end{center}
\end{figure}


\section{General Quantization and Quantizing Classical Knots}
In this section we give a general definition of quantization, in analogy to that given in 
\cite{LomQknots2}. We then apply this definition to the quantization of classical knots that are represented by embeddings of the circle in Euclidean three-space.
\bigbreak

\noindent {\bf Definition.} Let $E$ be a collection of mathematical objects. We will call $E$ the set of 
{\em motifs} to be quantized (see \cite{LomQknots2}). Let $G$ be a group acting on the elements of 
$E$ so that each element of $G$ permutes $E.$ That is, we assume that for each $g \in G$ we have a
mapping taking $K \longrightarrow g(K)$ for each $K \in E$ such this is a 1-1 correspondence of 
$E$ with itself and so that $g(h(K)) = (gh)(K)$ where $g$ and $h$ are in $G$ and $gh$ denotes the product of these group elements in $G.$ We further assume that the identity element in $G$ acts as the identity mapping on $E.$ We then {\em quantize} the pair $(E,G)$ by forming a Hilbert space $H(E)$
with orthonormal basis consisting  in the set  $\{ |K\rangle : K \in E  \}.$  Here we take the elements of  
$H(E)$ to be finite sums of basis elements with complex coefficients and we use the usual Hermitian inner product on this space. Since the group $G$ acts on the basis by permuting it, we see that the action extends to an action of $G$ on $H(E)$ by unitary transformations. We call the new pair
$(H(E), G)$ (with this unitary action) the {\em quantization} of $(E,G).$
\bigbreak

Classical knot theory is formulated in terms of continuous embeddings of circles into the three dimensional space $R^{3}$ or the three dimensional sphere $S^{3}$ (which may be taken as the
set of vectors of unit length in Euclidean four dimensional space, or as the one-point compactification of 
$R^{3}.$ A knot is represented by an embedding $K: S^{1} \longrightarrow R^{3}.$ where $S^{1}$ denotes the circle (i.e. the set of points at unit distance from the origin in the Euclidean plane) with the 
topology inherited from the Euclidean plane. If $h:R^{3} \longrightarrow R^{3}$ is an orientation 
preserving homeomorphism of $R^{3}$, then by forming the composition $K' = h \circ K$ defined by
$h \circ K (x) = h(K(x))$ for $x \in S^{1}$, we obtain a new embedding $K'.$ We say that the two
embeddings $K$ and $K'$ are {\em equivalent}. We say that two embeddings $K$ and $K'$ represent the same knot type if there is an orientation preserving homeomorphism $h$ (as above) such that 
$K' = K\circ h.$ The set $G$ of orientation preserving homeomorphisms of $R^{3}$ forms a group under compositiion. The set of circle embeddings $$E(S^{1}) = \{K:S^{1} \longrightarrow R^{3}\}$$
is acted upon by $G$ via composition. In this way the  group $G$ acts as a group of permutations of the set $E(S^{1}).$ Note that we mean this action in the sense of group representations. We have that for
$g, h \in G,$ $$g\circ(h \circ K)) = (g\circ h)\circ K$$ for $K:S^{1} \longrightarrow R^{3},$ any embedding of a circle in $R^{3}.$ Note also that two elements $K$ and $K'$ of $E(S^{1})$ are equal if and only if they are point-wise equal as functions on the circle $S^{1}.$
\bigbreak

Let $H(E(S^{1})$ denote the Hilbert space for which the set of embeddings $E(S^{1})$ is an orthonormal basis. We take this space to be the set of finite linear combinations of its basis elements. We denote the basis elements of this Hilbert space by $|K \rangle$ where $K$ is an embedding of the circle in $R^{3}$. Using $G$ as defined above, we have $G$ applied to the basis elements of $H(E(S^{1})$ acting as a group of permutations of the basis. These permuations extend to unitary transformations on the entire Hilbert space, giving a quantization of $(E(S^{1}, G)$ in accord with the definition given in this section.
\bigbreak

\noindent {\bf Remark.} See Figure 3 for an illustration of a classical knot equivalence. Note that the
quantization of the set embeddings that represent classical knots gives a Hilbert space of uncountable 
dimension, just as there are an uncountable number of embeddings that can represent knots in three dimensional space. Thus this quantization must be contrasted with the mosiac knots where we have created a hierarchy of finite dimensional spaces and finite groups to handle quantum information for combinatorial knot theory. The quantization of classical knots that is given in this section is 
intellectually satisfying since it quantizes the full geometrical context for knot theory. This same 
context of embeddings of objects or placements of structures in three dimensional space is the place where most ideas in geometry, topology and physics are carried out. Thus we expect that this very large quantization of knots will be useful in studying knots in physical situations such as vortices in 
super-cooled helium \cite{Rasetti} or the possibility of knotted structures in gluon fields \cite{BK}.
\bigbreak 

\begin{figure}[htb]
     \begin{center}
     \begin{tabular}{c}
     \includegraphics[width=7cm]{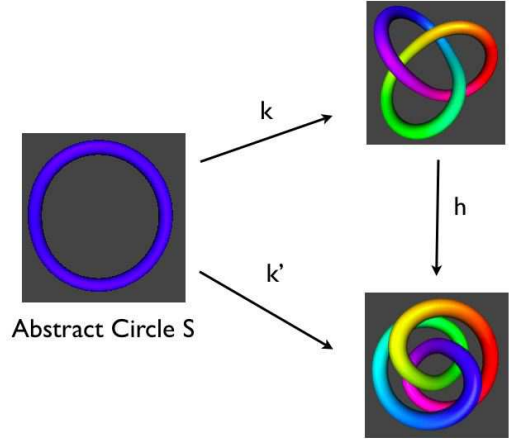}
      \end{tabular}
     \caption{\bf Classical knot equivalence via ambient homeomorphism}
     \label{Figure}
\end{center}
\end{figure}


\section{Classifying Maps for Knots in the Three-Sphere}
In this section we give a short proof of the well-known theorem described below. The theorem tells us that any knot in the three-sphere (or equivalently in Euclidean three-space)
can be represented as the inverse image of $0$ for a differentiable mapping from $S^3$ to $D^2.$ We can include $D^2$ in the complex plane $C$ and view the map as a time-independent  wave function. Thus this general theorem tells us that that any knot can be regarded as the set of zeroes of a quantum wave function. Recent results since Berry
\cite{} have given many realizations of such knotted zeroes as algebraic varieties and with regard to specific physical systems such as the hydrogen atom and the harmonic oscillator.
Here we point out the generality of this phenomenon, and we shall ask many questions in the discussion below. For now we point out that this Theorem, allows us to shift the definition of 
quantum knot to this category of maps from the three-sphere to the complex plane, and to view a knot as the zeroes of a wavefunction. Quantum knots are knotted zeroes of time-independent wave functions.\\

\begin{theorem}  Given a smooth knot $K$ in $S^3$ (the three-dimensional sphere),  there exists a differentiable map $f:S^3 \longrightarrow D^2$ so that $f$ is transverse to $0 \in D^2$ (maximal rank) and $f^{-1}(0) = K.$
\end{theorem}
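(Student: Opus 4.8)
The plan is to build $f$ separately on a tubular neighborhood of $K$ and on its complement, and then glue the two pieces. First I would choose a closed tubular neighborhood $N$ of $K$ in $S^{3}$. Since $K$ is a smooth circle in the oriented $3$-manifold $S^{3}$, its normal bundle is an orientable rank-$2$ bundle over $S^{1}$ and hence trivial, so there is a diffeomorphism $N \cong S^{1} \times D^{2}$ carrying $K$ to $S^{1} \times \{0\}$. On $N$ I would define $f$ to be this diffeomorphism followed by the projection $S^{1} \times D^{2} \longrightarrow D^{2}$ onto the second factor. This map is a submersion, hence automatically transverse to $0$, and $f^{-1}(0) \cap N = S^{1} \times \{0\} = K$. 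Its restriction to $\partial N = S^{1} \times \partial D^{2}$ is the projection onto the meridian circle $\partial D^{2}$.

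Next I would extend $f$ over the knot complement $C = \overline{S^{3} \setminus N}$, a compact $3$-manifold whose boundary $\partial C = \partial N$ is a torus. I require the extension $g$ to take values in $\partial D^{2} \subset D^{2}$, so that it never meets $0$, and to agree with $f$ along $\partial C$. Since $\partial D^{2} \simeq S^{1}$ is a $K(\mathbf{Z},1)$, homotopy classes of maps $X \longrightarrow S^{1}$ are classified by $H^{1}(X;\mathbf{Z})$, and a map $\partial C \longrightarrow S^{1}$ extends over $C$ exactly when its class in $H^{1}(\partial C;\mathbf{Z}) \cong \mathbf{Z}^{2}$ lies in the image of the restriction homomorphism $H^{1}(C;\mathbf{Z}) \longrightarrow H^{1}(\partial C;\mathbf{Z})$. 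Now $H^{1}(C;\mathbf{Z}) \cong \mathbf{Z}$ (by Alexander duality, equivalently $H_{1}(C) \cong \mathbf{Z}$), and under the inclusion $\partial C \hookrightarrow C$ the longitude $\lambda$ maps to $0$ in $H_{1}(C)$ --- it bounds a Seifert surface lying in $C$ --- while the meridian $\mu$ maps to a generator; dualizing, the restriction map sends a generator of $H^{1}(C;\mathbf{Z})$ to the cohomology class $\mu^{\vee}$ dual to the meridian. But the class represented by $f|_{\partial C}$ is precisely $\mu^{\vee}$, since $f|_{\partial C}$ is the projection onto the meridian circle. Hence the required extension $g$ exists.

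Finally I would upgrade this to a smooth statement. Take a continuous extension, homotope it within a collar of $\partial C$ so that it equals $f$ exactly on $\partial C$, and glue it to $f$ on $N$ to obtain a continuous map $F: S^{3} \longrightarrow D^{2}$ with $F^{-1}(0) = K$. Then apply the relative smooth approximation theorem, leaving $F$ unchanged on a neighborhood of $K$ where it already equals the submersion $f$, and observing that any sufficiently $C^{0}$-close smooth approximation on $C$ still maps into a neighborhood of $\partial D^{2}$ that is bounded away from $0$. The resulting smooth map satisfies $F^{-1}(0) = K$ and is transverse to $0$ because it coincides with the submersion $f$ near $K$.

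I expect the main obstacle --- indeed essentially the only nontrivial input --- to be the cohomological step: identifying the restriction map $H^{1}(C;\mathbf{Z}) \longrightarrow H^{1}(\partial C;\mathbf{Z})$ and recognizing that the boundary data produced by the framing (the meridian projection) lands in its image. Geometrically this is the assertion that the trivialization of the normal bundle of $K$ extends over the complement ``up to the meridian direction,'' which amounts to the longitude being null-homologous in $C$, i.e. to the existence of a Seifert surface for $K$; once this is in hand, the gluing and smoothing steps are routine.
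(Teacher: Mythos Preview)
Your argument is correct and is, in fact, exactly the alternative the paper itself sketches in the remark following its proof: maps into $S^{1}=K(\mathbf{Z},1)$ are classified by $H^{1}$, and $H^{1}(S^{3}-K)\cong\mathbf{Z}$, so the meridian projection on $\partial N(K)$ extends over the complement. The paper, however, deliberately takes a more explicit, combinatorial route. From a diagram of $K$ it uses the Wirtinger presentation of $\pi_{1}(S^{3}-K)$: the complement has the homotopy type of a CW complex $X(K)$ built from a wedge of circles (one per arc $a_{i}$), $2$-cells attached along the relators $c^{-1}b^{-1}ab$, and a single $3$-cell. Sending every $a_{i}$ to the generator $t\in\pi_{1}(S^{1})$ kills each relator since $t^{-1}t^{-1}tt=1$, so the map extends over the $2$-skeleton and then (trivially) over the $3$-cell; one then transports $F:X(K)\to S^{1}$ to $S^{3}-\mathrm{Int}\,N(K)$ and fills in $N(K)$ by projection, just as you do. Your approach is shorter and isolates the essential input (existence of a Seifert surface, i.e.\ null-homologous longitude); the paper's approach is explicit cell by cell from a diagram, which is the point they want, since they aim to tie the construction to their mosaic and lattice formalisms.

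One small thing to tighten: when you first trivialize $N\cong S^{1}\times D^{2}$, you must pick the \emph{Seifert} framing. With an arbitrary framing the associated longitude $\lambda_{f}=S^{1}\times\{\mathrm{pt}\}$ satisfies $\lambda_{f}=\lambda_{0}+n\mu$ in $H_{1}(\partial C)$, and the boundary class of your projection evaluates to $-n$ on the Seifert longitude $\lambda_{0}$; since the image of $H^{1}(C)\to H^{1}(\partial C)$ is exactly the annihilator of $\lambda_{0}$, the extension exists only when $n=0$. You clearly have this in mind in your final paragraph, but it should be said when the trivialization is chosen, not only at the end.
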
\\

\begin{proof} Let $K$ be described by a knot diagram with arcs $\{ a_1,a_2, \cdots , a_{n} \}.$ Each crossing $i$ in the diagram has an overcrossing arc $a_{over(i)}$ and two incident
undercrossing arcs $a_{input(i)}$ and $a_{output(i)}.$ Input and output arcs are chosen so that  the input arc sees the overcrossing line going from left to right as one approaches the crossing along the input arc. This direction of approach may or may not agree with the orientations on the undercrossing arcs.
In the Wirtinger presention of the fundamental group of the knot complement $S^{3} - K$ there is an associated relation of the form  
$$a_{output(i)} = (a_{over(i)})^{-1}  a_{input(i)} a_{over(i)}.$$
It is known (See \cite{Stillwell}.) that the homotopy type of the knot complement $S^3 - K$ is the same as that of the $CW$ complex $X(K)$ formed by taking a wedge of circles, one circle for each
of the generators $a_{i}$ and  and one two-cell for each relation as described above, and a three-cell that is attached to these 2-cells as we shall describe below. The boundary of the two-cell is
$$(a_{output(i)})^{-1}(a_{over(i)})^{-1}  a_{input(i)} a_{over(i)}.$$
Letting $c= a_{output(i)},$  $b = a_{over(i)}$ and $a = a_{input(i)},$ we have  the relation
$$c = b^{-1} a b$$ and cell boundary $$c^{-1} b^{-1} a b.$$\\

With this description, form the map $F: \{ a_1,a_2, \cdots , a_{n} \} \longrightarrow S^{1}$ by taking each $a_{i}$ in the wedge of circles to $S^{1}$ diffeomorphically according to its orientation. Letting $t$ denote the generator of the fundamental group of $S^{1},$ this means that at the level of the fundamental group each $a_{i}$ is sent by $F$ to $t$, and hence the boundary of the two-cell (in every case) is sent via  $c^{-1} b^{-1} a b \longrightarrow t^{-1} t^{-1} t t  = 1.$ Hence the mapping $F$ extends over each two-cell in the complex $X(K),$ and since there is no obstruction to extending over the three-cell,  the map $F:X(K) \longrightarrow S^{1}$ is now defined.\\

The complex $X(K),$ described in the last paragraph, is obtained as follows. Take a diagram for the knot $K$ that is almost planar, with arcs above the plane for each overcrossing in the diagram, each such arc meeting the plane in two points and then continuing below the plane a short distance to form corresponding horizontal arcs for the undercrossings.
Take a tubular neighborhood of the knot so that the intersection of the tubular neighborhood with the plane is a disjoint collection of discs that result from the thickenings of the undercrossing arcs. Removing the tubular neighborhood of the knot, one has that the space $A$ above the plane has free fundamental group generated by the arcs 
$\{ a_1,a_2, \cdots , a_{n} \}$ and has the homotopy type of a wedge of circles, one for each $a_{i}.$ The space $B$ below the plane is a three-ball, and the intersection of $B$ with
the plane is a punctured plane with one hole for each crossing and such that the element of fundamental group carried by the boundary of the hole is $(a_{output(i)})^{-1}(a_{over(i)})^{-1}  a_{input(i)} a_{over(i)}$ at that crossing. Thus the boundary of this hole bounds a disc in the lower space $B,$ and the van Kampen Theorem gives us the above result about the fundamental group. As we see from this description, the complex $X(K)$ has the homotopy type of the complement of the tubular neighborhood of the knot with the two-cells partially embedded in the lower part $B$ and extending into $A$ to bound the appropriate loops. The three-cell is attached via the lower part $B.$\\

Choose a tubular neighborhood of the knot $K$ so that we have an embedding of $S^1 \times D^2$ into $S^3$ with image $N(K) = K \times D^2.$ One can choose a system of circles on the boundary of this tubular neighborhood that correspond to the Wirtinger generators so that the two-cells of $X(K)$ are embedded in $S^3 - Interior(N(K))$ and the rest of $S^3 - Interior(N(K))$ retracts to $X(K).$ In this way we obtain a mapping $f: S^{3} - Interior(N(K)) \longrightarrow S^{1}$ extending our mapping $F:X(K) \longrightarrow S^1$ and such that 
the restriction of $f$ to the boundary of $N(K)$ is the projection $K \times S^1 \longrightarrow S^1.$ This map extends to $K \times D^2 \longrightarrow D^2$ by taking projection to the second factor. With this extension we have constructed a mapping $$f: S^3 \longrightarrow D^2$$ such that $f^{-1}(0) = K.$ This completes the proof of the Theorem. 
\end{proof}

\noindent {\bf Remarks} Note that the Theorem works for links as well as for knots, with the caveat that one must consider tubular neighborhoods of each link component.
This Theorem is sometimes proved by noting that the first cohomology group of a space $X$ is given by the homotopy classes of mappings of that space to the circle: $$H^{1}(X) = [X,S^1]$$ where $[X,Y]$ denotes the set of homotopy classes of maps from $X$ to $Y.$ Thus  $H^{1}(S^3 - K) = [S^3 - K, S^1]$ and the proof follows by using the fact that $H^{1}(S^3 - K) = Z,$ the integers. Here we have given an explicit construction of the mapping up to homotopy. We can think of the mapping $f:S^3 \longrightarrow D^2$ as a map into the complex numbers $C,$ and hence as a time independent wave function whose zeroes are the knot. This leads to a number of good questions and further
remarks:
\begin{enumerate}

\item  Given $f: S^3 \longrightarrow C$ representing a knot $K \subset S^3$ so that $f^{-1}(0) = K,$ and a homeomorphism $h: S^3 \longrightarrow S^3$ with $K = h(K'),$ we have
$g = f \circ h:  S^3 \longrightarrow C$ with $g^{-1}(0) = K'.$ Thus the ambient group of diffeomorphisms of the three-sphere acts on the knots represented as wave-functions. We can regard these as unitary actions of this group with an appropriate quantum space for the ``quantum knots" $f: S^3 \longrightarrow C.$ This point of view needs investigation.

\item Given a grid diagram or a mosaic quantum knot, how concretely can we construct the mapping f? We have described in the proof of the theorem how to 
make the mapping from a diagram, and hence from a mosaic diagram. This construction involves choices that pinpoint it only up to homotopy type. It may be possible to choose a more canonical method to produce the classifying mapping.

\item Given a knot $K$ in $S^3$ one can construct a Seifert spanning surface $F$ for $K,$  a surface embedded in the three-sphere. By splitting $S^3 - N(K)$ along $F$ one obtains a three-manifold $M$ with boundary $F_{-} \cup F_{+}$ a union of two copies of $F$ whose intersection is exactly $K.$ An argument similar to the one given in the Theorem above, shows that there is a mapping $F: M - (K \times I) \longrightarrow I$ where $I = [0,1]$ with $F^{-1}(0) = F_{-}$ and $F^{-1}(1) = F_{+}.$ This closes to a map from $S^{3} - N(K)$ to the circle, and again yields a classifying map for the knot $K.$ This construction via Seifert surface has particularly good properties in the case of a fibered knot, where the the mapping $F$ is a fiber bundle over the interval, and the corresponding mapping $f:S^3 - N(K) \longrightarrow S^1$ is a fiber bundle over the circle with each fiber a copy of the spanning surface for the knot. 

\item In \cite{Milnor} Milnor proves a fibration theorem for knots that are links of complex hypersurface singulaties. There one starts with a complex polynomial mapping $f:C^{n} \longrightarrow C$ with an isolated singularity at the origin. The { \it link} of the singularity $L(f)$ is given by the formula $L(f) = Var(f)\cap S^{2n-1}_{\epsilon}$ where $Var(f)$ denotes the collection of solutions to the equation $f(z_1,z_2,\cdots, z_n) = 0$, the variety of $f$, and $S^{2n-1}_{\epsilon}$ is a sufficiently small sphere about the origin in $C^{n}.$ The resulting $L(f)$ is called the link of the singularity and it is a manifold of dimension $2n-3,$ hence of codimension two in the sphere. When $n=2$ we have that $L(f)$ is a link or knot in the three-sphere. Milnor shows that the mapping $\phi = f/|f| : S^{2n-1}_{\epsilon} - L(f)  \longrightarrow S^{1}$ is a smooth fibration of the complement of $L(f).$ As we have described above, this fibration allows us to construct a classifying map $F: S^{2n-1} \longrightarrow C$ such that $F^{-1}(0) = L(f).$ Thus we obtain specific classifying mappings for knots and links that are associated with isolated complex polynomial singularities. Similar results can be obtained in special cases by using only partially polynomial maps. For example in 
\cite{Rudolph} Lee Rudloph shows that the if $F(z,w) = w^3 - 3z \bar{z}(1 + z + \bar{z})w - 2(z + \bar{z})$ and $G(z,w) = F(z^2,w)$ then the figure eight knot is the link of $G.$ One can see directly that the figure eight knot is a fibered knot by using the branched covering methods of Goldsmith \cite{Goldsmith} and generalized by  Harer \cite{Harer}, but the Rudolph construction gives explicit semi-analytic polynomial mappings that classify the knot. More along these lines in relation to the physics can be found in the work of Berry \cite{Berry} and Dennis \cite{BodeDennis} and their collaborators. Furthermore the papers by Peralta-Salas and collaborators \cite{Peralta1,Peralta2} establish that for any finite link there exists (complex-valued) eigenfunctions of the harmonic oscillator in $R^3$ (and also for the Coulomb potential) such that the link is a union of connected components of the nodal set of the 
eigenfunction.

 \item For fibered knots, we have that the classifying map is a fibration whose fibers are all diffeomorphic copies of a spanning surface for the knot. In some cases we can construct
the fibration quite explicity (e.g for the figure eight knot) via lifting  branched covering constructions as pioneered by Goldsmith \cite{Goldsmith}. This suggests starting with these as test cases for Schrodinger evolution. We would like the classifying mapping $f$ to be regarded as an initial condition for a Schrodinger evolution of $f$ as a wave function. 
Then we can ask: How does the zero set behave under the Schrodinger evolution? How is this related to the work of Berry \cite{Berry} and other physicists and mathematicians working on this problem?

\item In thinking about the evolution of the zeroes of the wave function, recall that the 
Schrodinger equation has the form  $$i \hbar \partial \psi / \partial t  = H \psi$$ where $H$ is the Hamiltonian operator.
Suppose that $$\psi = \psi(x,0) : R^3 \longrightarrow C$$ has $\psi^{-1}(0) = K,$ a knot or link in $R^3.$
What can we say about the temporal evolution of the inverse image of $0?$ What Hamiltonians should we examine?\\

We could take $$\psi = e^{-i/\hbar H t} \psi_{0}$$
if $H$ is time independent. 
Lets remember that in Mosaic Quantum Knots we have the ambient group acting on them unitarily. And similarly we have an ambient group of diffeomorphisms acting on maps
$S^3 \longrightarrow C$ by composing with diffeos of $S^3$ that carry $K'$ to $K.$ We can make a Hamiltonian that corresponds to an isotopy of one knot to another and place it in a Schrodinger evolution. This at least can be said. 

\item This paper has been designed to set the stage for an exploration of these issues.

\end{enumerate}


\begin{thebibliography}{99}

\bibitem{Ah1}
D. Aharonov, V. Jones, Z. Landau,
A polynomial quantum algorithm for approximating the Jones polynomial,
quant-ph/0511096.

\bibitem{Ah2}
D. Aharonov, I. Arad,
The BQP-hardness of approximating the Jones polynomial,
quant-ph/0605181 

\bibitem{Baxter}  R.J. Baxter.  Exactly Solved Models in Statistical Mechanics.  Acad. Press (1982).

\bibitem{Berry} M. Berry, Knotted Zeros in the Quantum States of Hydrogen,  Foundations of Physics 31 (4):659-667 (2001).

\bibitem{BK}
R. V. Buniy and T. W. Kephart, Glueballs and the universal energy spectrum of tight knots and links. Int.J.Mod.Phys. A20 (2005) 1252-1259. physics.hep-ph/0408027.

\bibitem{BodeDennis}  B. Bode B, M. R. Dennis, D. Foster, R. P. King. (2017) Knotted fields and explicit fibrations for lemniscate knots. Proc. R. Soc. A 473: 20160829. http://dx.doi.org/10.1098/rspa.2016.0829



\bibitem{Goldsmith}   D. L. Goldsmith, Symmetric fibered links, In 
``Knots, groups, and 3-manifolds"  (Papers dedicated to the memory of R. H. Fox), pp. 3Ð23. Ann. of Math. Studies, No. 84, Princeton Univ. Press, Princeton, N.J., 1975.

\bibitem{Harer} J. Harer, How to construct all fibered knots and links, Topology, Vol. 21, No. 3, pp. 263-280, (1982).

\bibitem{JO} 
V.F.R. Jones, A polynomial invariant for links via von Neumann algebras,
Bull. Amer. Math. Soc. {\bf 129} (1985), 103--112.

\bibitem{JO1}
 V.F.R.Jones.  Hecke algebra representations of braid groups and link polynomials.  Ann. of Math.  126 (1987), pp. 335-338.
 
\bibitem{JO2}
V.F.R.Jones.  On knot invariants related to some statistical mechanics models.  Pacific J. Math., vol. 137, no. 2 (1989), pp. 311-334.

\bibitem{KaB}  
L.H. Kauffman, State models and the Jones polynomial, Topology {\bf 26} (1987),
395--407.

 \bibitem{KA89}  
L.H. Kauffman,  Statistical mechanics and the Jones polynomial,  AMS
Contemp. Math. Series  {\bf 78} (1989), 263--297.

\bibitem{KL}
L.H. Kauffman and S. Lins, {\em Temperley-Lieb Recoupling Theory and Invariants of Three-Manifolds},
Princeton University Press, Annals Studies {\bf 114} (1994). 

\bibitem {KP}
L.H. Kauffman, {\em Knots and Physics}, World Scientific Publishers (1991), 
Second Edition (1993), Third Edition (2002).

\bibitem{QCJP}
L.H. Kauffman, Quantum computing and the
Jones polynomial, math.QA/0105255, in {\em Quantum Computation and Information}, S. Lomonaco, 
Jr. (ed.), AMS CONM/305, 2002, pp.~101--137.

\bibitem{KhoJones}
L. H. Kauffman. A quantum model for the Jones polynomial, Khovanov homology and generalized 
simplicial homology. in ``Cross disciplinary advances in quantum computing'', Contemp. Math.
Vol. 536, ed. by Mahdavi, Koslover and Brown. Amer. Math. Soc. 2011.

\bibitem{KauffTopQ}
L. H. Kaufman and S. J.  Lomonaco. Quantum Topology and quantum computing,
 in ``Mathematics of Quantum Computation and Quantum Technology" 
 edited by Goong Chen, L. H. Kauffman and S. J. Lomonaco, 
 Chapman and Hall/CRC Applied Mathematics and Nonlinear Science Series (2008), pp. 409-504.
 
\bibitem{KauffQknots}
L.  H. Kauffman and S.  J. Lomonaco Jr., Quantum knots, in {\it 
Quantum Information and Computation II -- Proceedings of Spie, 12 -14 April 2004} (2004), 
ed. by Donkor Pirich and Brandt, Intl. Soc. Opt. Eng, pp. 268-284.

\bibitem{KauffQknots1} 
L. H.Kauffman and S. J. Lomonaco, Quantizing Knots, Groups and Graphs, 
in the SPIE Proceedings, April 28-29, (2011), ed. by Donkor Pirich and Brandt, Intl. Soc. Opt. Eng, pp. 8057OT-1 -- 15.


\bibitem{LomQKnots}
 S. J. Lomonaco and L. H. Kauffman, Quantum Knots and Mosaics, 
Journal of Quantum Information Processing, Vol. 7, Nos. 2-3, (2008), pp. 85 - 115. 
Republished inAMS PSAPM/68, (2010),  177-208.
http://arxiv.org/abs/0805.0339

\bibitem{LomQknots1}
 S.  J. Lomonaco, L. H. Kauffman. Quantum Knots and Lattices, or a Blueprint for Quantum Systems that Do Rope Tricks. in ``Quantum Information Science and Its Contributions to Mathematics", Proc. Symposia in Appl. Math. Vol. 68,  ed. by S. J. Lomonaco, Amer. Math. Soc. (2010).arXiv:0910.5891.


\bibitem{LomQknots2} 
S. J. Lomonaco , and L. H.Kauffman, Quantizing Braids and Other Mathematical
Structures: The General Quantization procedure, 
to appear in the SPIE Proceedings, April 28-29, (2011).

\bibitem{Spin}
L. H. Kauffman and S. J. Lomonaco Jr.  $q$-deformed spin networks, knot polynomials and anyonic
topological quantum computation. J. Knot Theory Ramifications 16 (2007), no. 3, 267--332. 

\bibitem{Fibonacci}
L. H. Kauffman and S. J. Lomonaco Jr., The Fibonacci Model and the Temperley-Lieb Algebra.
{\it International J. Modern Phys. B}, Vol. 22, No. 29 (2008), 5065-5080.

\bibitem{Three}
L. H. Kauffman and S. J. Lomonaco, Jr., A 3-Stranded Quantum Algorithm for the Jones Polynomial,
Proc. SPIE, vol. 6573, (2007), 65730T-1-65730T-13. http://arxiv.org/abs/0706.0020. 

\bibitem{QM}
S. J. Lomonaco Jr. and L. H. Kauffman, A Quantum Manual for Computing the Jones Polynomial,  Proc. SPIE on Quantum Information and Computation VI, Vol. 6976, (2008), 
pp. 69760K-1 to 69760K-4.

\bibitem{Group1}
R. Marx, A. Fahmy, L. H. Kauffman, S. J. Lomonaco Jr., A Sp\"{o}rl, N. Pomplun, 
T. SchulteHerbr\"{u}ggen,
J. M. Myers, and S. J. Glaser, Nuclear-magnetic-resonance quantum calculations of the Jones polynomial,  {\it Physical Review A}, (2010).

\bibitem{Milnor} J. Milnor, ``Singular Points of Complex Hypersurfaces", Annals of Mathematics Studies No. 61, Princeton University Press (1968).

\bibitem{NSSF}
C. Nayak, E. H. Simon, A. Stern, M. Freedman, S.  Das Sarma, Non-abelian anyons and topological quantum computation. Rev. Modern Phys. 80 (2008), no. 3, 1083--1159.

\bibitem{Peralta1} A. Enciso, D. Hartley, D. Peralta-Salas, A problem of Berry and knotted zeros in the eigenfunctions
of the harmonic oscillator, J. Eur. Math. Soc. 20 (2018), 301-314.

\bibitem{Peralta2} A. Enciso, D. Hartley, D. Peralta-Salas, Dislocations of arbitrary topology in Coulomb eigenfunctions,
Rev. Mat. Iberoam. (2018), no. 3, 1361-1371.

\bibitem{Rasetti}
M. Rasetti and T. Regge, Vortices in He II, current algebras and quantum knots. Physica {\bf 80A}
(1975) 217-233. North Holland Pub. Co.

\bibitem{Rudolph} L. Rudolph, Isolated critical points of mappings from R4 to R2 and a natural splitting of the Milnor number of a classical fibered link.
Part I: Basic theory; examples,  Comment. Math. Helvetici {\bf 62} (1987), pp. 630-645.

\bibitem{Shor}
E. Farhi, D. Gosset, A. Hassidim, A. Lutomirski, P. Shor,
Quantum money from knots. arXiv:1004.5127 [quant-ph].

\bibitem{Stillwell} J. Stillwell, Classical Topology and Combinatorial Group Theory,
Graduate Texts in Mathematics, Springer-Verlag New York Inc. (1993).

 
\bibitem{Witten} E. Witten. Quantum Field Theory and the Jones Polynomial. Comm. in Math. Phys.
Vol. 121 (1989), 351-399.

\end{thebibliography}
\end{document}